\documentclass{article}[12pt]

\usepackage{amsmath,amssymb,amsthm}
\usepackage[affil-it]{authblk}
\usepackage{enumitem}

\makeindex

\newcommand{\sm}{\setminus}
\newcommand{\su}{\subseteq}
\newcommand{\Z}{\mathbb Z}

\newcommand{\1}{\mathbf 1}

\newtheorem{theorem}{Theorem}[section]
\newtheorem{lemma}[theorem]{Lemma}
\newtheorem{corollary}[theorem]{Corollary}
\newtheorem{proposition}[theorem]{Proposition}

\theoremstyle{definition}

\newtheorem{remark}[theorem]{Remark}
\newtheorem{observation}[theorem]{Observation}

\author{Krishnendu Paul, Shameek Paul%
\thanks{E-mail addresses: \texttt{krishnendupaul@ggdcgopi2.ac.in, shameek.paul@rkmvu.ac.in}}}
\affil{\small
Government General Degree College Gopiballavpur-II,
P.O. Beliaberah, Dist. Jhargram, 721517, India \\

\bigskip
Ramakrishna Mission Vivekananda Educational and Research Institute, P.O. Belur Math, Dist. Howrah, 711202, India}

\date{}
\title {$\{\pm 1\}$-weighted zero-sum constants}

\begin{document}

\baselineskip=14.5pt

\maketitle

\begin{abstract}
Let $A,B\su \Z_n\sm\{0\}$. A sequence $S=(x_1,\ldots, x_k)$ in $\Z_n$ is called an $(A,B)$-weighted zero-sum sequence if there exist $a_1,\ldots,a_k\in A$ and $b_1,\ldots,b_k\in B$ such that $a_1x_1+\cdots+a_kx_k=0$ and $b_1a_1+\cdots+b_ka_k=0$. The constant $E_{A,B}(n)$ is defined to be the smallest positive integer $k$ such that every sequence of length $k$ in $\Z_n$ has an $(A,B)$-weighted zero-sum subsequence of length $n$. We determine the constant $E_{A,B}(n)$ and the related constants $C_{A,B}(n)$ and $D_{A,B}(n)$ when $A=\{\pm 1\}$ and $B=\{1\}$.
\end{abstract}

\section{Introduction}\label{intro}

By a sequence $S$ in a set $X$ of length $k$, we mean an element of the set $X^k$. By a subsequence we shall always mean a nonempty subsequence. Throughout this article, $M$ will denote a finite module over the given ring. The following definition is given in \cite{KS}.

Let $R$ be a ring and let $A,B$ be nonempty subsets of $R\sm \{0\}$. A sequence $S=(x_1,\ldots,x_k)$ in $M$ is called an $(A,B)$-weighted zero-sum sequence if there exist $a_1,\ldots,a_k\in A$ and $b_1,\ldots,b_k\in B$ such that $a_1x_1+\cdots +a_kx_k=0$ and $b_1a_1+\cdots +b_ka_k=0$.

The constant $D_{A,B}(M)$ is the smallest positive integer $k$ such that every sequence having length $k$ in $M$ has an $(A,B)$-weighted zero-sum subsequence. The constant $C_{A,B}(M)$ is the least positive integer $k$ such that every sequence having length $k$ in $M$ has an $(A,B)$-weighted zero-sum subsequence of consecutive terms.

The constant $E_{A,B}(M)$ is the least positive integer $k$ such that every sequence having length $k$ in $M$ has an $(A,B)$-weighted zero-sum subsequence having length $|M|$.

We let $D_A(M)$ denote the constant $D_{A,B}(M)$ where $B=\{0\}$. We define the constants $C_A(M)$ and $E_A(M)$ in a similar manner. Also, we let $D(M)$ denote the constant $D_\1(M)$ and we define $C(M)$ and $E(M)$ in a similar manner.

For every $n\in \mathbb N$ with $n\geq 2$ we denote the ring $\mathbb Z/n\mathbb Z$ by $\mathbb Z_n$. In the case $M=R=\Z_n$, we replace the symbol $M$ by $n$ in the notation for the above constants.

Let $S=(x_1,\ldots,x_k)$ be a sequence in $M$ and let $x\in M$. We define the {\it translate} of $S$ by $x$ to be the sequence $S+x=(x_1+x,\ldots,x_k+x)$. We denote the set $\{1\}$ by $\1$.

\begin{observation}\label{tr}
Suppose a sequence $S$ in a module $M$ is an $(A,\1)$-weighted zero-sum sequence. Then every translate of $S$ is also an $(A,\1)$-weighted zero-sum sequence, since the following identity holds.
\[a_1(x_1+x)+\cdots+a_k(x_k+x)=a_1x_1+\cdots +a_kx_k+(a_1+\cdots+a_k)x.\]
Hence, to show that a sequence $S$ has an $(A,\1)$-weighted zero-sum subsequence, it is enough to show that this is true for a translate of $S$.
\end{observation}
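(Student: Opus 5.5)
The plan is to verify the observation directly from the definition of an $(A,\1)$-weighted zero-sum sequence. When $B=\1$, the second condition $b_1a_1+\cdots+b_ka_k=0$ reduces to $a_1+\cdots+a_k=0$, because every $b_i$ must equal $1$. So we assume that $S=(x_1,\ldots,x_k)$ is an $(A,\1)$-weighted zero-sum sequence. Then there are weights $a_1,\ldots,a_k\in A$ satisfying
\[a_1x_1+\cdots+a_kx_k=0 \quad\text{and}\quad a_1+\cdots+a_k=0 \text{ in } R.\]
We fix these weights and show that they also work for every translate $S+x$, where $x\in M$.

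The one computation needed is the displayed identity, which follows from distributivity of the module action:
\[a_i(x_i+x)=a_ix_i+a_ix.\]
Summing over $i$ and using $\sum a_ix=(\sum a_i)x$ gives
\[\sum_i a_i(x_i+x)=\sum_i a_ix_i+\Big(\sum_i a_i\Big)x.\]
Both terms on the right vanish: the first by the first condition, and the second because $\sum a_i=0$ and $0\cdot x=0$. The second condition for $S+x$ is $\sum a_i=0$ with all $b_i=1$. It is unchanged, since the weights do not depend on the sequence. Hence $S+x$ is an $(A,\1)$-weighted zero-sum sequence.

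For the \emph{hence} part, translation acts termwise. A subsequence $T$ of $S+x$ is therefore exactly $T'+x$ for the corresponding subsequence $T'$ of $S$. This correspondence preserves length and preserves consecutiveness. Translation by $-x$ undoes translation by $x$, so the first part applied with $-x$ transfers any $(A,\1)$-weighted zero-sum subsequence of $S+x$ back to one of $S$. As a result, the property relevant to $D_{A,\1}$, $C_{A,\1}$ or $E_{A,\1}$ may be checked on any translate. No step is a real obstacle. The only point needing care is that the condition $\sum a_i=0$, available because $B=\1$, is exactly what makes the correction term $(\sum a_i)x$ vanish.
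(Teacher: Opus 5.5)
Your proposal is correct and follows the paper's own argument: the paper justifies the observation only by the displayed identity, and you apply it the same way, noting that $B=\1$ forces $a_1+\cdots+a_k=0$, which kills the correction term $(a_1+\cdots+a_k)x$. Your ``hence'' step, which pulls a weighted zero-sum subsequence of $S+x$ back to $S$ by translating by $-x$ and notes that length and consecutiveness are preserved, makes explicit what the paper leaves to the reader.
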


In this article, we show the following results:

\noindent
Let $M=R=\Z_n$ and $A=\{\pm 1\}$. Then
\begin{itemize}
\item We have $C_{A,\1}(n)=2C_A(n)$ and $D_A(n)+1\leq D_{A,\1}(n)\leq 2D_A(n)$.

\item When $n$ is odd, we have $E_{A,\1}(n)=2n-1$.

\item When $n$ is even, we have $E_A(n)\leq E_{A,\1}(n)\leq n-2+D_{A,\1}(n)$.
\end{itemize}

\noindent
Let $M$ be a finite $\Z_2$-module. Then $\{\pm 1\}=\{1\}$ and we have
\[C_{\1,\1}(M)=2C(M),~D_{\1,\1}(M)=D(M)+1,\text{ and }E_{\1,\1}(M)=E(M).\]
From this, we can show that if $M=\Z_2^r$, then
\[C_{\1,\1}(M)=2^{r+1},~D_{\1,\1}(M)=r+2,\text{ and }E_{\1,\1}(M)=2^r+r.\]

\section{Some lower bounds}

Given a ring $R$, we let $R^*$ denote the group of units of the ring $R$.

\begin{observation}\label{al2}
Let $T$ be an $(A,B)$-weighted zero-sum sequence in $M$. If either $A\su R^*$ or $B\su R^*$, then $T$ has length at least two, since $0\notin A\cup B$.
\end{observation}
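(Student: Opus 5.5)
The plan is to argue by contradiction, directly from the definition of an $(A,B)$-weighted zero-sum sequence recalled in Section~\ref{intro}. Suppose $T=(x_1)$ has length one and is an $(A,B)$-weighted zero-sum sequence. Then there exist $a_1\in A$ and $b_1\in B$ with
\[a_1x_1=0\quad\text{and}\quad b_1a_1=0.\]
I will show that the second equation alone is impossible under either hypothesis. Since a subsequence is by convention nonempty, this rules out length one and leaves length at least two.

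The key step is a case split on which hypothesis holds.
\begin{itemize}
\item If $A\su R^*$, then $a_1$ is a unit. Multiplying $b_1a_1=0$ on the right by $a_1^{-1}$ gives $b_1=0$, contradicting $b_1\in B\su R\sm\{0\}$.
\item If $B\su R^*$, then $b_1$ is a unit. Multiplying on the left by $b_1^{-1}$ gives $a_1=0$, contradicting $a_1\in A\su R\sm\{0\}$.
\end{itemize}
In both cases we obtain an element of $A\cup B$ equal to $0$, which is impossible since $0\notin A\cup B$.

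There is no real obstacle here. The only point needing care is to use the side of the product on which the unit sits, because $R$ need not be commutative. The first equation $a_1x_1=0$ in $M$ is never used.
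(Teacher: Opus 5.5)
Your argument is correct and is exactly the reasoning the paper compresses into ``since $0\notin A\cup B$'': a length-one $(A,B)$-weighted zero-sum sequence would force $b_1a_1=0$, which is impossible when one of the two nonzero factors is a unit. Your care in cancelling the unit on the correct side is appropriate, since $R$ need not be commutative.
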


\begin{proposition}\label{clb}
If either $A\su R^*$ or $B\su R^*$, then $C_{A,B}(M)\geq 2C_A(M)$.
\end{proposition}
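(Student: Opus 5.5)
We will construct an explicit sequence of length $2C_A(M)-1$ in $M$ that has no $(A,B)$-weighted zero-sum subsequence of consecutive terms. Let $k=C_A(M)$. By definition of $C_A(M)$ (recall $C_A$ corresponds to $B=\{0\}$, so only the condition $\sum a_ix_i=0$ is imposed), there is a sequence $T=(y_1,\ldots,y_{k-1})$ of length $k-1$ in $M$ with no $A$-weighted zero-sum subsequence of consecutive terms. From $T$ we build the interleaved sequence
\[S=(0,y_1,0,y_2,0,\ldots,0,y_{k-1},0),\]
which has length $2(k-1)+1=2k-1$. The goal is to show that $S$ has no $(A,B)$-weighted zero-sum subsequence of consecutive terms; this gives $C_{A,B}(M)\geq 2k$.

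Suppose $S'$ is a block of consecutive terms of $S$ that is an $(A,B)$-weighted zero-sum sequence. We split into two cases.

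\emph{Case 1: $S'$ contains no $y_i$.} Then $S'$ consists of a single term, namely a $0$, because no two zeros of $S$ are adjacent. This contradicts Observation~\ref{al2}, which guarantees length at least two under the hypothesis $A\su R^*$ or $B\su R^*$.

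\emph{Case 2: $S'$ contains some $y_i$.} The $y$-terms appearing in $S'$ form a consecutive block $y_i,y_{i+1},\ldots,y_j$ of $T$, since $S'$ is consecutive in $S$. The weighted sum $\sum a_t x_t$ over $S'$ vanishes, and the zeros contribute nothing to it. Hence the weights $a_i,\ldots,a_j\in A$ attached to the $y$-terms give $a_iy_i+\cdots+a_jy_j=0$. This is an $A$-weighted zero-sum subsequence of consecutive terms of $T$, which contradicts the choice of $T$.

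The only delicate point is Case 1, and it is exactly where the hypothesis enters. Interleaving zeros is what forces any single-element block to be a lone $0$, and Observation~\ref{al2} excludes length one. The condition $\sum b_ia_i=0$ is otherwise unused, since we only need the weaker consequence that the weighted sum of the $x_i$ vanishes. Everything else is routine bookkeeping.
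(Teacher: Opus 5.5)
Your proof is correct and follows the paper's argument: you interleave zeros into an extremal sequence of length $C_A(M)-1$, use Observation~\ref{al2} to rule out a lone zero, and project any longer block onto a consecutive $A$-weighted zero-sum block of the original sequence. Your explicit two-case split spells out what the paper's proof compresses into a single sentence.
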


\begin{proof}
Let $k=C_A(M)$ and let $S'=(x_1,\ldots,x_{k-1})$ be a sequence in $M$ which does not have any $A$-weighted zero-sum subsequence of consecutive terms. Let $S=(0,x_1,0,x_2,0,\ldots,0,x_{k-1},0)$ be the sequence which is obtained from $S'$ by putting zeroes in alternate positions. Suppose $S$ has an $(A,B)$-weighted zero-sum subsequence of consecutive terms.  Then by Observation \ref{al2} we get the contradiction that $S'$ has an $A$-weighted zero-sum subsequence of consecutive terms. Since $S$ has length $2k-1$, it follows that $C_{A,B}(M)\geq 2k$.
\end{proof}

\begin{proposition}\label{dlb}
If either $A\su R^*$ or $B\su R^*$, then $D_{A,B}(M)\geq D_A(M)+1$.
\end{proposition}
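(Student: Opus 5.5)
We mimic the proof of Proposition \ref{clb}, replacing the padding zeroes by a single appended zero. Let $k=D_A(M)$ and fix a sequence $S'=(x_1,\ldots,x_{k-1})$ in $M$ that has no $A$-weighted zero-sum subsequence. Put $S=(x_1,\ldots,x_{k-1},0)$, of length $k$. It suffices to show that $S$ has no $(A,B)$-weighted zero-sum subsequence; then $D_{A,B}(M)\geq k+1$.

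Suppose $T$ is an $(A,B)$-weighted zero-sum subsequence of $S$. By Observation \ref{al2}, $T$ has length at least two. Let $T'$ be $T$ with the term $0$ removed if $T$ contains it. Then $T'$ is nonempty and is a subsequence of $S'$. If $a_1y_1+\cdots+a_my_m=0$ is the weighted sum for $T$, dropping the term $a_j\cdot 0$ does not change the sum. Hence $T'$ is an $A$-weighted zero-sum subsequence of $S'$, since the condition involving $B$ is simply discarded. This contradicts the choice of $S'$.

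Two points need checking: that the $B$-condition plays no role in the conclusion, and that $T'$ is nonempty. The first holds because $D_A$ uses $B=\{0\}$, so the second equation is trivially satisfied. The second follows from Observation \ref{al2}, and this is the only place the hypothesis $A\su R^*$ or $B\su R^*$ is used. No step here poses a real obstacle.
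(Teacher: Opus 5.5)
Your proof is correct and follows essentially the same route as the paper. Both arguments append a zero to an extremal sequence $S'$ for $D_A(M)$, and both use Observation \ref{al2} to guarantee that removing that zero from any $(A,B)$-weighted zero-sum subsequence leaves a nonempty $A$-weighted zero-sum subsequence of $S'$, which is a contradiction.
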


\begin{proof}
Let $k=D_A(M)$ and let $S'=(x_1,\ldots,x_{k-1})$ be a sequence in $M$ which does not have any $A$-weighted zero-sum subsequence. Let $S=(x_1,\ldots,x_{k-1},0)$. Suppose $S$ has an $(A,B)$-weighted zero-sum subsequence. By Observation \ref{al2} we get the contradiction that $S'$ has an $A$-weighted zero-sum subsequence. Since $S$ has length $k$, it follows that $D_{A,B}(M)\geq k+1$.
\end{proof}

\section{Upper bounds when $A=\{\pm 1\}$ and $B=\{1\}$}

For $a,b\in\mathbb Z$ we denote the set $\{x\in\mathbb Z:a\leq x\leq b\}$ by $[a,b]$. We let $|X|$ denote the number of elements in a finite set $X$.
For sequences $T$ and $T'$ in $M$, we let $T+T'$ denote the concatenation of $T$ and $T'$. For a sequence $S=(x_1,\ldots,x_k)$ in $M$, we say that the sum of $S$ is $\sum S=x_1+\cdots+x_k$.

\begin{observation}\label{ssl}
Let $A=\{\pm 1\}$ and let $S$ be a sequence in $M$. Suppose there exist two disjoint subsequences of $S$, say $T$ and $T'$, such that $S$ is a permutation of $T+T'$, and both the sequences $T$ and $T'$ have the same sum and length. Then $S$ is an $(A,\1)$-weighted zero-sum sequence.
\end{observation}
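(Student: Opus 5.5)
The plan is to write down explicit weights and check the two defining conditions directly. We take $R$ to be the ambient ring and $B=\1$, so the $b_i$ are all $1$. Since $S$ is a permutation of $T+T'$, we may relabel the terms of $S$ so that $T=(y_1,\ldots,y_m)$ and $T'=(z_1,\ldots,z_m)$. Both have the same length $m$ by hypothesis, and $S$ has length $2m$. Reordering a sequence does not affect whether it is an $(A,\1)$-weighted zero-sum sequence, because both defining conditions are sums that are invariant under permutation of the indices. So it is enough to treat the sequence $T+T'$.

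We assign weight $a_i=1$ to each term $y_i$ of $T$ and weight $a_i=-1$ to each term $z_j$ of $T'$. All of these weights lie in $A=\{\pm 1\}$.

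The first condition is the weighted sum:
\[y_1+\cdots+y_m-(z_1+\cdots+z_m)=\textstyle\sum T-\sum T'=0,\]
which holds because $T$ and $T'$ have the same sum.

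For the second condition, take every $b_i=1\in\1$. The sum $b_1a_1+\cdots+b_{2m}a_{2m}$ is then the sum of the weights, namely $m\cdot 1+m\cdot(-1)=0$. This holds because $T$ and $T'$ have the same length.

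Hence $S$ is an $(A,\1)$-weighted zero-sum sequence. The only point needing any care is the reduction from $S$ to $T+T'$ via the permutation, and that is immediate. I do not expect a genuine obstacle anywhere in this argument.
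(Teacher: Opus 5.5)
Your proof is correct and is the argument the paper leaves implicit, since the observation is stated without proof. You put weight $+1$ on the terms of $T$ and $-1$ on the terms of $T'$; the weighted sum is then $\sum T-\sum T'=0$, and with all $b_i=1$ the sum of the weights is the length of $T$ minus the length of $T'$, which is $0$.
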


\begin{theorem}\label{cub}
Let $A=\{\pm 1\}$. We have $C_{A,\1}(M)=2C_A(M)$.
\end{theorem}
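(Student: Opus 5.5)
The lower bound $C_{A,\1}(M)\geq 2C_A(M)$ is Proposition \ref{clb}, since $A=\{\pm 1\}\su R^*$. So the plan is to prove the upper bound. Let $k=C_A(M)$. We will show that every sequence $S=(x_1,\ldots,x_{2k})$ in $M$ has an $(A,\1)$-weighted zero-sum subsequence of consecutive terms.

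The idea is to pair up consecutive terms. Define $y_i=x_{2i-1}-x_{2i}$ for $i\in[1,k]$. This gives a sequence $S'=(y_1,\ldots,y_k)$ of length $k=C_A(M)$. By the definition of $C_A(M)$, there exist indices $1\leq s\leq t\leq k$ and signs $\epsilon_s,\ldots,\epsilon_t\in\{\pm 1\}$ such that
\[\epsilon_s y_s+\cdots+\epsilon_t y_t=0.\]

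Now take the consecutive block $T=(x_{2s-1},x_{2s},\ldots,x_{2t-1},x_{2t})$ of $S$. Give $x_{2i-1}$ the weight $\epsilon_i$ and $x_{2i}$ the weight $-\epsilon_i$. The weighted sum is
\[\sum_{i=s}^t \epsilon_i(x_{2i-1}-x_{2i})=0.\]
The weights form $\epsilon_s,-\epsilon_s,\ldots,\epsilon_t,-\epsilon_t$, so they sum to $0$. This is exactly the condition $b_1a_1+\cdots+b_ja_j=0$ with all $b_i=1$. Hence $T$ is an $(A,\1)$-weighted zero-sum subsequence of consecutive terms. This is the same mechanism as Observation \ref{ssl}, but applied to the pairs.

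Therefore $C_{A,\1}(M)\leq 2k$, and combining this with Proposition \ref{clb} gives equality. There is no real obstacle. The only point that needs care is to pair the terms in adjacent positions $(2i-1,2i)$, so that a consecutive block of the $y_i$ corresponds to a consecutive block of the original sequence $S$.
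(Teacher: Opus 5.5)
Your proposal is correct and matches the paper's proof: you pair adjacent terms via $y_i=x_{2i-1}-x_{2i}$, apply the definition of $C_A(M)$ to $(y_1,\ldots,y_k)$, lift the consecutive block back to $S$, and take the lower bound from Proposition~\ref{clb}. Your explicit assignment of the weights $\epsilon_i,-\epsilon_i$ simply spells out the step the paper leaves implicit.
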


\begin{proof}
Let $k=C_A(M)$ and let $S=(x_1,\ldots,x_{2k})$ be a sequence in $M$. For every $i\in [1,k]$ let $y_i=x_{2i-1}-x_{2i}$. Consider the sequence $S'=(y_1,\ldots,y_k)$. Since $C_A(M)=k$, we see that $S'$ has an $A$-weighted zero-sum subsequence $T'$ of consecutive terms. Thus, we see that there exist $i,j\in [1,k]$ such that $i\leq j$ and $T'=(y_i,\ldots,y_j)$. Let $T=(x_{2i-1},x_{2i},\ldots,x_{2j-1},x_{2j})$. Since $A=\{\pm 1\}$, we see that $T$ is an $(A,\1)$-weighted zero-sum subsequence of consecutive terms of $S$. Hence, it follows that $C_{A,\1}(M)\leq 2k$. Since $A\su R^*$, we are done by Proposition \ref{clb}.
\end{proof}

We give a combinatorial proof of the next result using lattice paths.

\begin{lemma}\label{binom}
We have $\binom{2k}{k}>2^k$.
\end{lemma}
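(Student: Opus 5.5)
We argue combinatorially, as the paper announces, by interpreting $\binom{2k}{k}$ as the number of lattice paths from $(0,0)$ to $(k,k)$ that use unit steps $R=(1,0)$ and $U=(0,1)$. Such a path is a word of length $2k$ in the letters $R$ and $U$ containing exactly $k$ of each, so there are $\binom{2k}{k}$ of them. To bound this number below by $2^k$ we construct an injection from $\{R,U\}^k$, a set of size $2^k$, into the set of these paths. We then exhibit one path that is not in the image, which gives the strict inequality. Throughout we take $k\geq 1$; for $k=0$ both sides equal $1$ and the statement fails, so it should be read for positive $k$.

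The injection pairs each step with its complement. Given a word $w=(w_1,\ldots,w_k)$ with each $w_i\in\{R,U\}$, let $\overline{w_i}$ denote the other letter. We send $w$ to the path
\[(w_1,\overline{w_1},w_2,\overline{w_2},\ldots,w_k,\overline{w_k}).\]
Each consecutive pair $(w_i,\overline{w_i})$ contributes exactly one $R$ and one $U$. Hence the image has $k$ steps of each kind and is a genuine lattice path to $(k,k)$. The map is injective because $w$ is read off from the odd-position letters of its image. This already gives $\binom{2k}{k}\geq 2^k$.

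For strictness, consider the path consisting of $k$ copies of $R$ followed by $k$ copies of $U$. Every path in the image has different letters in positions $1$ and $2$. When $k\geq 2$, this path has $RR$ in positions $1$ and $2$, so it is not in the image. When $k=1$, however, the path $RU$ is the image of the word $(R)$, so this witness fails. That case needs a separate check: directly, $\binom{2}{1}=2=2^1$. The inequality is therefore not strict at $k=1$, and the lemma must be read with $k\geq 2$ (presumably the range in which it is applied later).

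The only real obstacle is this strictness step together with the edge cases $k=0$ and $k=1$. The injection itself is routine. For $k\geq 2$ the argument above is complete.
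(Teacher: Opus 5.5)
Your proof is correct. Your caveat is also right: the inequality fails at $k=0$ and at $k=1$ (where $\binom{2}{1}=2=2^1$), so the lemma holds exactly for $k\geq 2$.

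Your route differs from the paper's in the direction of the comparison. You build an explicit injection $\{R,U\}^k\hookrightarrow\{\text{paths to }(k,k)\}$ by interleaving each letter with its complement, and you get strictness from the single witness $R^kU^k$. The paper instead uses a surjection the other way. Every path to $(k,k)$ crosses the antidiagonal $x+y=k$ exactly once, so it can be truncated to its length-$k$ prefix, which is an arbitrary binary string of length $k$. Strictness then comes from one fiber having at least two elements, namely the fiber over a prefix of weight one.

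Your version is more self-contained: that the image lies in the set of balanced paths, and that the map is injective, are each checked in one line. The paper asserts surjectivity and the size of the relevant fiber without justification. The paper's map, however, carries more structure. The fiber over a prefix with $a$ up-steps consists of its $\binom{k}{a}$ completions, so it really proves $\binom{2k}{k}=\sum_a\binom{k}{a}^2\geq\sum_a\binom{k}{a}=2^k$, with strictness as soon as some $\binom{k}{a}\geq 2$. That count also shows the paper's proof silently needs $k\geq 2$: the weight-one prefix has exactly $k$ completions, which confirms your edge-case analysis.

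One correction to your guess that the lemma is only applied with $k\geq 2$. Theorem \ref{ubd} invokes it for every $k$ with $2^k\geq|M|$, which allows $k=1$ when $|M|=2$. In that case the theorem's conclusion genuinely fails: Theorem \ref{d11} gives $D_{\1,\1}(\Z_2)=3>2$. The later application to $M=R=\Z_n$ is fine, since there $2^k>n\geq 2$ forces $k\geq 2$.
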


\begin{proof}
Let $D=\big\{(x,y)\in \Z\times \Z:x+y=k\big\}$, let $\mathcal L$ be the set of all lattice paths from $(0,0)$ to $(k,k)$, and let $\mathcal L'$ be the set of all lattice paths from $(0,0)$ to a point of $D$. There are $\binom{2k}{k}$ paths in $\mathcal L$, since the paths in $\mathcal L$ correspond to binary strings of length $2k$ which have weight $k$. There are $2^k$ paths in $\mathcal L'$, since the paths in $\mathcal L'$ correspond to binary strings of length $k$. Each path in $\mathcal L$ passes through a unique point of $D$. So we get an onto map from $\mathcal L$ to $\mathcal L'$. Further, there exist at least two paths in $\mathcal L$ which are mapped to the path in $\mathcal L'$ which corresponds to a weight one binary string having length $k$.
\end{proof}

\begin{theorem}\label{ubd}
Let $A=\{\pm 1\}$ and let $k$ be such that $2^k\geq |M|$. Then we have $D_{A,\1}(M)\leq 2k$.
\end{theorem}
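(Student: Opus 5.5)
Let $S=(x_1,\ldots,x_{2k})$ be a sequence in $M$ of length $2k$. The plan is to apply the pigeonhole principle to the sums of all $k$-element subsequences of $S$, and then to use Observation \ref{ssl} on two such subsequences with equal sums. For each $k$-subset $I\su[1,2k]$ put $\sigma(I)=\sum_{i\in I}x_i\in M$. There are $\binom{2k}{k}$ such subsets. By Lemma \ref{binom} and the hypothesis, $\binom{2k}{k}>2^k\geq |M|$. Hence there are two distinct $k$-subsets $I\neq J$ with $\sigma(I)=\sigma(J)$.

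Next remove the common part. Put $I'=I\sm J$ and $J'=J\sm I$. Since $|I|=|J|$, we get $|I'|=|J'|$, and this common size is positive because $I\neq J$. Subtracting $\sum_{i\in I\cap J}x_i$ from both sides gives $\sum_{i\in I'}x_i=\sum_{i\in J'}x_i$. The sets $I'$ and $J'$ are disjoint, so the subsequence of $S$ indexed by $I'\cup J'$ is a permutation of $T+T'$. Here $T$ and $T'$ are the subsequences indexed by $I'$ and $J'$, and they have the same length and the same sum.

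By Observation \ref{ssl}, this subsequence is an $(A,\1)$-weighted zero-sum sequence. Explicitly, take weight $+1$ on $I'$ and $-1$ on $J'$. Then the weighted sum is $0$, and the sum of the weights is $|I'|-|J'|=0$. So every sequence of length $2k$ in $M$ has an $(A,\1)$-weighted zero-sum subsequence, and $D_{A,\1}(M)\leq 2k$.

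There is no real obstacle here. The only delicate point is that the pigeonhole step needs the strict inequality $\binom{2k}{k}>|M|$, and this is exactly what Lemma \ref{binom} supplies. One should also check that the subsequence produced is nonempty, which holds because $I\neq J$.
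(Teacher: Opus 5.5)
Your proposal is correct and is essentially the paper's own proof. You apply pigeonhole to the $\binom{2k}{k}$ choices of $k$ terms using Lemma~\ref{binom}, cancel the common terms, and apply Observation~\ref{ssl}; working with index sets, as you do, is if anything cleaner when $S$ has repeated terms.

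One caveat applies to both your proof and the paper's. The ``delicate point'' you flag is a real problem at $k=1$, because Lemma~\ref{binom} is false there: $\binom{2}{1}=2=2^1$. The statement itself fails for $|M|=2$ and $k=1$. For example, $(0,1)$ in $\Z_2$ has no $(A,\1)$-weighted zero-sum subsequence, consistent with $D_{\1,\1}(\Z_2)=3$ from Theorem~\ref{d11}. So the argument needs either $k\geq 2$, or the strict hypothesis $2^k>|M|$ combined with $\binom{2k}{k}\geq 2^k$. The strict hypothesis is what the later application to $\Z_n$ actually supplies.
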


\begin{proof}
Let $S=(x_1,\ldots,x_{2k})$ be a sequence in $M$. By Lemma \ref{binom} we see that $\binom{2k}{k}>2^k$. Since $2^k\geq |M|$, it follows that $\binom{2k}{k}>|M|$. Thus, there exist two distinct subsequences $T'$ and $T''$ of $S$ having length $k$ such that both $T'$ and $T''$ have the same sum. Let $T$ be the subsequence of $S$ whose terms are the common terms of $T'$ and $T''$. Since the subsequences $T'-T$ and $T''-T$ are disjoint and they have the same sum and length, by Observation \ref{ssl} we see that $S$ has an $(A,\1)$-weighted zero-sum subsequence. It follows that $D_{A,\1}(M)\leq 2k$.
\end{proof}

\begin{observation}\label{char}
Let $A=\{\pm 1\}$. Suppose $\text{char }R$ is $n$. When $n$ is even, every $(A,\1)$-weighted zero-sum sequence in $M$ has even length. When $n$ is odd, every $(A,\1)$-weighted zero-sum sequence in $M$ having length $n$ is a zero-sum sequence.
\end{observation}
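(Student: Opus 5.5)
The plan is to take an $(A,\1)$-weighted zero-sum sequence $T=(x_1,\ldots,x_k)$ in $M$ with $A=\{\pm 1\}$, and work with its weights $a_1,\ldots,a_k\in\{\pm 1\}$. Since $B=\1$, the second condition in the definition says exactly that $a_1+\cdots+a_k=0$ in $R$. So let $p$ be the number of indices with $a_i=1$ and $q$ the number with $a_i=-1$. Then $p+q=k$, and $(p-q)\cdot 1_R=0$ in $R$. Because $\text{char }R=n$, this means $n$ divides $p-q$ as an integer.

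\emph{Case $n$ even.} Here $n\mid p-q$ forces $p-q$ to be even. Since $k=p+q=(p-q)+2q$, it follows that $k$ is even, which is the first claim.

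\emph{Case $n$ odd and $k=n$.} We have $p-q\equiv 0\pmod n$ and $0\le p,q\le n$ with $p+q=n$, so $|p-q|\le n$. Hence $p-q\in\{-n,0,n\}$. The value $p-q=0$ would give $n=2p$, which is impossible for odd $n$. Therefore either $p=n,q=0$ or $p=0,q=n$, so all weights are equal.
\begin{itemize}
\item If all $a_i=1$, the relation $a_1x_1+\cdots+a_nx_n=0$ says directly that $\sum T=0$.
\item If all $a_i=-1$, the relation reads $-\sum T=0$, so again $\sum T=0$.
\end{itemize}
In either case $T$ is a zero-sum sequence.

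The only step that needs care is passing from $a_1+\cdots+a_k=0$ in $R$ to the integer divisibility $n\mid p-q$. This uses precisely that $m\cdot 1_R=0$ if and only if $n\mid m$, which is the definition of characteristic. Everything else is elementary parity and size bookkeeping.
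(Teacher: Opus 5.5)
Your argument is correct and is the reasoning the paper leaves implicit for this observation, which it states without proof: the condition $\sum a_i=0$ becomes $n\mid p-q$. This forces $k=p+q$ to be even when $n$ is even, and forces all weights to be equal when $n$ is odd and $k=n$. One small fix is needed when $\text{char }R=2$, a case the paper uses in Section 4: there $1=-1$ in $R$, so you should define $p$ and $q$ by choosing integer signs $\epsilon_i\in\{\pm1\}$ with $a_i=\epsilon_i\cdot 1_R$, which guarantees $p+q=k$.
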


\begin{theorem}\label{even}
Let $\emph{char }R$ be even, let $m$ be an even number, and let $A=\{\pm 1\}$. Then every sequence in $M$ of length $m-2+D_{A,\1}(M)$ has an $(A,\1)$-weighted zero-sum subsequence of length $m$.
\end{theorem}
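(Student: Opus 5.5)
Let $D=D_{A,\1}(M)$. The plan is a greedy extraction argument: repeatedly pull out short $(A,\1)$-weighted zero-sum subsequences from disjoint parts of $S$ and concatenate them until their total length is exactly $m$. The basic tools are these.

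First, a concatenation of $(A,\1)$-weighted zero-sum sequences is again one: add the weighted sums, and add the weight sums.

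Second, by Observation \ref{char}, since $\text{char }R$ is even, every $(A,\1)$-weighted zero-sum subsequence has even length. By Observation \ref{al2}, that length is at least $2$.

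Third, any sequence of length at least $D$ contains an $(A,\1)$-weighted zero-sum subsequence. By the definition of $D$, and by minimality we may take it of length at most $D$.

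The extraction proceeds as follows. Start from $S$ of length $m-2+D$. While the remaining sequence has length at least $D$, extract from it an $(A,\1)$-weighted zero-sum subsequence $T_i$ and remove it. Each $T_i$ has even length at most $D$. Suppose we have removed $T_1,\dots,T_j$ with total length $\ell_j$, and $\ell_j\le m-2$. Then the remainder has length at least $D$, so we can extract another block. Hence the process continues until $\ell_j\ge m-1$, which means $\ell_j\ge m$ since all lengths are even. The difficulty is overshoot: the last block may make $\ell_j>m$.

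To control overshoot, I would make each block have length exactly $2$, or else split long blocks. The key refinement is to extract a shortest $(A,\1)$-weighted zero-sum subsequence $T$ from a remainder $R'$ of length at least $D$, and show it can be chosen of length $2$ whenever $\ell_{j}+|T|>m$.

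The alternative is to argue the other way round. Stop at the last stage $\ell_j\le m-2$, and let $r=m-\ell_j\ge 2$, an even number; the remainder has length $m-2+D-\ell_j=r-2+D$. It then suffices to prove the statement for $m=r$ on the remainder. So the problem reduces to showing that any sequence $R'$ of length $r-2+D$ contains an $(A,\1)$-weighted zero-sum subsequence of length exactly $r$. This suggests an induction on even $m$.

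The base case is $m=2$. A sequence of length $D$ has an $(A,\1)$-weighted zero-sum subsequence, of some even length $2t$. For $m=2$ we need one of length exactly $2$, i.e. $x_i=x_j$ for some $i\ne j$ (using weights $1,-1$). This does not follow immediately, so this is the real obstacle.

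My plan for the obstacle is to use structure of a minimal $(A,\1)$-weighted zero-sum sequence $T=(x_1,\dots,x_{2t})$. It has $t$ weights $+1$ and $t$ weights $-1$, so it can be written as $\sum_{i}(u_i-v_i)=0$ with pairs $(u_i,v_i)$.

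For the induction step from $m-2$ to $m$, take $S$ of length $m-2+D$. By the induction hypothesis applied to any subsequence of length $m-4+D$, we get a block $U$ of length $m-2$. Then we need two further elements, disjoint from $U$, that are equal. Alternatively, we modify $U$ by a pair exchange, using Observation \ref{tr} to translate so that a chosen element is $0$.

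A cleaner route is to pick disjoint zero-sum pieces greedily of even lengths. Since each minimal piece $T$ of length $2t\le D$ can be split into $t$ pairs $(u_i,v_i)$, I would try to show that among the remaining $D-2+2$ elements one can trade pairs to adjust the length by exactly $2$.

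I expect this length-adjustment step, going from "some weighted zero-sum subsequence of even length" to "one of length exactly $m$", to be the main difficulty. I would attack it first by strong induction on $m$, with the pair-decomposition of minimal blocks, trying to replace an overshooting block by a shorter one built from the unused $D-2$ elements.
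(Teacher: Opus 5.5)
\textbf{There is a genuine gap.} Your proposal leaves the decisive step, reaching length exactly $m$, open. The route you choose for it cannot be completed, because the base case $m=2$ you reduce to is false. So is the statement itself for arbitrary even $m$.

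Take $M=R=\Z_4$. A length-two $(A,\1)$-weighted zero-sum sequence needs weights $1,-1$, since $1+1=(-1)+(-1)=2\neq 0$; hence it needs two equal terms. Together with Observation \ref{char}, this gives $D_{A,\1}(4)=4$. Indeed, $(0,1,2)$ has no weighted zero-sum subsequence. A sequence of length $4$ either repeats a term or is a permutation of $(0,1,2,3)$, which is weighted zero-sum via $0+1-2-3=0$. But $(0,1,2,3)$ has length $2-2+D_{A,\1}(4)$ and no weighted zero-sum subsequence of length $2$.

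The result does hold when $m\ge |M|$, which covers its only use ($m=|M|$ in Corollary \ref{eam}). So a correct argument must exploit a hypothesis of this kind, and yours never does. For the same reason, your reduction ``stop at $\ell_j\le m-2$ and prove the case $r=m-\ell_j$ on the remainder'' can strand you in a false case even when $m=|M|$. For $S=(0,0,0,1,2,3)$ in $\Z_4$ with $m=4$, extracting $(0,0)$ first leaves $(0,1,2,3)$, which has no length-$2$ block. Yet $S$ contains the length-$4$ block $(0,1,2,3)$.

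The missing idea, which is the paper's, is this. Length-two blocks are exactly pairs of equal terms, so use them only as fillers around one long block, and let distinctness bound the overshoot. The steps are:
\begin{enumerate}
\item Let $T$ consist of a maximum number of disjoint pairs of equal terms of $S$. If $|T|\ge m$, take $m/2$ of these pairs and stop.
\item Otherwise $|T|\le m-2$, so $S'=S-T$ has at least $d=D_{A,\1}(M)$ terms, and these terms are pairwise distinct by maximality of $T$.
\item Take an inclusion-maximal $(A,\1)$-weighted zero-sum subsequence $T'$ of $S'$, of even length $l$. By your concatenation remark, $S'-T'$ has at most $d-1$ terms.
\item Hence $|T|+l\ge m-1$, so $|T|\ge m-l$ by parity.
\item Adjoin $(m-l)/2$ pairs of $T$ to $T'$ to get length $m$.
\end{enumerate}
The last step needs $l\le m$, which the paper's write-up uses without comment. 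It holds because $l\le |S'|\le |M|\le m$. This is precisely what breaks in the $\Z_4$, $m=2$ example, where $T$ is empty and $T'=S'$ has length $4>2$.
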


\begin{proof}
Let $S$ be a sequence in $M$ of length $m-2+d$ where $d=D_{A,\1}(M)$. Let $r$ be the largest integer such that $S$ contains $r$ pairs of repeated terms. Let $T$ be the subsequence of $S$ whose terms are these $r$ pairs of repeated terms and let $S'=S-T$.

Suppose $T$ has at least $m$ terms. In this case, we can find an $(A,\1)$-weighted zero-sum subsequence of $S$ having length $m$ by taking $m/2$ pairs of repeated terms from $T$. Since the length of $T$ is even, we may assume that $T$ has at most $m-2$ terms. It follows that $S'$ has at least $d$ terms.

Let $T'$ be a maximal $(A,\1)$-weighted zero-sum subsequence of $S'$. Let the length of $T'$ be $l$. By Observation \ref{char} we see that $l$ is even. Since $S'-T'$ has at most $d-1$ terms and since $S'-T'=(S-T)-T'=S-(T+T')$, we see that $T+T'$ has at least $m-1$ terms.

It follows that $T$ has at least $m-l-1$ terms. Since the length of $T$ is even, we see that $T$ has at least $m-l$ terms. Since $m-l$ is even, we may add $(m-l)/2$ pairs of repeated terms from $T$ to the subsequence $T'$ to get an $(A,\1)$-weighted zero-sum subsequence of $S$ having length $l+(m-l)=m$.
\end{proof}

\begin{corollary}\label{eam}
Let $A=\{\pm 1\}$. Suppose $\emph{char }R$ is even and $m=|M|$ is even. Then we have $E_A(M)\leq E_{A,\1}(M)\leq m-2+D_{A,\1}(M)$.
In particular, it follows that if $D_{A,\1}(M)=D_A(M)+1$, then $E_{A,\1}(M)=E_A(M)$.
\end{corollary}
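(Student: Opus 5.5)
The upper bound $E_{A,\1}(M)\leq m-2+D_{A,\1}(M)$ is Theorem \ref{even} applied with $m=|M|$, which is even by hypothesis. That theorem gives that every sequence of length $m-2+D_{A,\1}(M)$ has an $(A,\1)$-weighted zero-sum subsequence of length $|M|$, which is exactly the defining property of $E_{A,\1}(M)$. Since $E_{A,\1}(M)$ is the least such length, the bound follows.

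For the lower bound $E_A(M)\leq E_{A,\1}(M)$, I will compare definitions. Every $(A,\1)$-weighted zero-sum sequence is in particular an $A$-weighted zero-sum sequence, since it carries weights $a_i\in A$ with $\sum a_ix_i=0$. Hence any sequence of length $E_{A,\1}(M)$, which has an $(A,\1)$-weighted zero-sum subsequence of length $|M|$, also has an $A$-weighted zero-sum subsequence of length $|M|$. Therefore $E_A(M)\leq E_{A,\1}(M)$.

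For the ``in particular'' statement, suppose $D_{A,\1}(M)=D_A(M)+1$. Then the upper bound becomes $m-1+D_A(M)$. The step needing care is relating this to $E_A(M)$. I would invoke the known relation $E_A(M)\geq |M|-1+D_A(M)$, with equality (the weighted Gao-type theorem $E_A(M)=D_A(M)+|M|-1$). If only the inequality $E_A(M)\geq m-1+D_A(M)$ is needed, it comes from the standard construction: take a sequence of length $D_A(M)-1$ with no $A$-weighted zero-sum subsequence, and append $m-1$ zeros. Any subsequence of length $m$ must then use at least one term of the first sequence. Its weighted sum vanishes only if the weighted sum over those nonzero-part terms vanishes, which is impossible by construction.

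With $E_A(M)\geq m-1+D_A(M)=m-2+D_{A,\1}(M)\geq E_{A,\1}(M)\geq E_A(M)$, equality throughout gives $E_{A,\1}(M)=E_A(M)$. The main point to check is this lower-bound construction for $E_A$. I expect it to be routine, since the zeros contribute nothing to the weighted sum.
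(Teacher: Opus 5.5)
Your proposal is correct and follows the paper's proof exactly. The upper bound is Theorem \ref{even} with $m=|M|$, and the inequality $E_A(M)\le E_{A,\1}(M)$ comes straight from the definitions. For the equality case you spell out the padding construction (a sequence of length $D_A(M)-1$ with no $A$-weighted zero-sum subsequence, plus $m-1$ zeros) behind the bound $m-1+D_A(M)\le E_A(M)$, which the paper only calls easy to see.

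One point in your favour: specializing Theorem \ref{even} to $m=|M|$ is what makes it valid. In its proof the terms of $S'$ are distinct, which forces $l\le|M|=m$. For smaller even $m$ that theorem can fail, e.g.\ $m=2$ and the sequence $(0,1,2,3,4)$ in $\Z_6$.
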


\begin{proof}
Clearly, we have $E_A(M)\leq E_{A,\1}(M)$. Also, by Theorem \ref{even} we see that $E_{A,\1}(M)\leq m-2+D_{A,\1}(M)$. Thus, if $D_{A,\1}(M)=D_A(M)+1$, we see that $E_{A,\1}(M)\leq m-1+D_A(M)$. It is easy to see that $m-1+D_A(M)\leq E_A(M)$. Hence, it follows that $E_{A,\1}(M)=E_A(M)$.
\end{proof}

\section{$R=\Z_2$ and $A=\{1\}$}

\begin{observation}\label{211}
Let $R=\Z_2$ and let $S$ be a sequence in $M$ having even length. If $S$ is a zero-sum sequence, then $S$ is a $(\1,\1)$-weighted zero-sum sequence.
\end{observation}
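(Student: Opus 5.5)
The plan is to exhibit explicit weights. Here $A=B=\1$, so the only possible choice is $a_i=b_i=1$ for all $i$. With this choice the two defining conditions become $x_1+\cdots+x_k=0$ in $M$ and $1+1+\cdots+1=k\cdot 1=0$ in $R=\Z_2$. So the statement reduces to checking these two equalities for $S=(x_1,\ldots,x_k)$.

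First, the hypothesis that $S$ is a zero-sum sequence gives exactly $\sum_{i=1}^k a_ix_i=\sum S=0$. Second, the hypothesis that $k$ is even means $k=2j$ for some integer $j$. Hence $\sum_{i=1}^k b_ia_i=k\cdot 1=j(1+1)=0$ in $\Z_2$. Therefore both conditions in the definition of a $(\1,\1)$-weighted zero-sum sequence hold, and we are done.

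No step here is a real obstacle. The only point to state explicitly is that the second condition is evaluated in the ring $R=\Z_2$, not in $M$. That is where the evenness of the length enters: it is precisely the condition $k\cdot 1_R=0$ in a ring of characteristic two. It is consistent with Observation \ref{char}, which says that for even characteristic even length is in fact necessary. This makes the observation a converse of that statement in the special case $R=\Z_2$.
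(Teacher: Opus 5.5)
Your proof is correct and is exactly the reasoning the paper leaves implicit for this observation. With $A=B=\1$ the only weights are $a_i=b_i=1$, so the first condition is the zero-sum hypothesis, and the second is $k\cdot 1=0$ in $\Z_2$, which holds because $k$ is even.
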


\begin{theorem}\label{d11}
Let $R=\Z_2$. Then $C_{\1,\1}(M)=2C(M)$, $D_{\1,\1}(M)=D(M)+1$, and $E_{\1,\1}(M)=E(M)$.
\end{theorem}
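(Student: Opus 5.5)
Over $R=\Z_2$ we have $\{\pm 1\}=\1$, so the three claims follow from the results for $A=\{\pm1\}$ together with Observation \ref{211}. The first equality is immediate: Theorem \ref{cub} with $A=\{\pm 1\}=\1$ gives $C_{\1,\1}(M)=2C_\1(M)=2C(M)$.

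For $D_{\1,\1}(M)=D(M)+1$, Proposition \ref{dlb} (valid since $\1\su R^*$) gives $D_{\1,\1}(M)\geq D(M)+1$. For the upper bound, let $S$ have length $d+1$ where $d=D(M)$. Every $(\1,\1)$-weighted zero-sum sequence is exactly a zero-sum sequence of even length: the condition $b_1a_1+\cdots+b_ka_k=0$ reads $k\cdot 1=0$ in $\Z_2$. So it suffices to find a nonempty zero-sum subsequence of even length. Write $S=(x_1,\ldots,x_{d+1})$ and pass to the sequence $(x_1,1),\ldots,(x_{d+1},1)$ in $M\oplus\Z_2$? That would need $D(M\oplus \Z_2)$, which is not what we want. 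Instead I would argue directly. Since $x_1,\ldots,x_d$ has length $d$, it has a zero-sum subsequence $T_1$. If $|T_1|$ is even we are done. Otherwise, remove some term $y$ of $T_1$ and apply the same argument to $S$ with $y$ deleted, obtaining a zero-sum subsequence $T_2\neq T_1$. If $|T_2|$ is also odd, then the symmetric difference $T_1\triangle T_2$ has sum $\sum T_1+\sum T_2-2\sum(T_1\cap T_2)=0$, since $2M=0$. Its length is $|T_1|+|T_2|-2|T_1\cap T_2|$, which is even, and it is nonempty because $y\in T_1\sm T_2$. By Observation \ref{211} this gives the required subsequence, so $D_{\1,\1}(M)\leq D(M)+1$. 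This symmetric-difference step is the key point, and it uses that $M$ is a $\Z_2$-module, so multisets cancel in pairs.

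For $E_{\1,\1}(M)=E(M)$: when $|M|=1$ the statement is degenerate (the characteristic-two setting really assumes $M\neq 0$), so I take $|M|=2^r\geq 2$, which is even, and $\operatorname{char}R=2$ is even. Corollary \ref{eam} with $A=\1$ then applies. Since $D_{\1,\1}(M)=D(M)+1=D_\1(M)+1$ by the previous paragraph, its second part gives $E_{\1,\1}(M)=E_\1(M)=E(M)$. The only things to check are the inequality $m-1+D_A(M)\leq E_A(M)$ used in Corollary \ref{eam}, which is the standard lower bound and is taken there as given, and the elementary fact that a weighted zero-sum sequence is in particular an $A$-weighted one. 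The main obstacle is the upper bound for $D_{\1,\1}$; everything else is bookkeeping with the earlier results.
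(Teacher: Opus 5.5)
Your proposal is correct and follows the paper's proof essentially verbatim: you use Theorem \ref{cub} for $C$, and for $D$ you combine Proposition \ref{dlb} with the symmetric difference of two odd-length zero-sum subsequences $T_1\neq T_2$, where $T_2$ avoids a chosen term of $T_1$; you then apply Corollary \ref{eam} for $E$. Your exclusion of $M=0$ is a fair point that the paper leaves implicit: there $|M|=1$ is odd and no $(\1,\1)$-weighted zero-sum sequence of length $1$ exists, so $E_{\1,\1}(M)$ is undefined.
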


\begin{proof}
Let $S$ be a sequence in $M$ having length $D(M)+1$. Then the sequence $S$ has a zero-sum subsequence $T$. Let $x$ be a term of $T$ and let $S'=S-(x)$. Since $S'$ has length $D(M)$, we see that $S'$ has a zero-sum subsequence $T'$. Since $x$ is a term of $T$ which is not a term of $T'$, we see that $T\neq T'$.

Suppose both $T$ and $T'$ have odd length. Let $U$ be the subsequence of $S$ whose terms are those terms of $T$ and $T'$ which are not terms of both $T$ and $T'$. Since the length of $U$ has the same parity as the sum of the lengths of $T$ and $T'$, we see that $U$ has even length. Since $R=\Z_2$, we see that $\sum U=\sum T+\sum T'$.

Thus, $U$ is a zero-sum sequence. Hence, by Observation \ref{211} we see that $S$ has a $(\1,\1)$-weighted zero-sum subsequence. It follows that $D_{\1,\1}(M) \leq D(M)+1$. Hence, by using Proposition \ref{dlb} we see that $D_{\1,\1}(M)=D(M)+1$. So from Corollary \ref{eam} we see that $E_{\1,\1}(M)=E(M)$. Also, from Theorem \ref{cub} we see that $C_{\1,\1}(M)=2C(M)$.
\end{proof}

Let $M$ be a finite $R$-module. Since $\Z_2$ is a field, we see that $M=\Z_2^r$ where $r$ is the dimension of $M$. From Theorems 3.1, 4.1, and 5.2 of \cite{SKS2} we can deduce that $C(M)=2^r$, $D(M)=r+1$, and $E(M)=2^r+r$. Hence, from Theorem \ref{d11} it follows that $C_{\1,\1}(M)=2^{r+1}$, $D_{\1,\1}(M)=r+2$, and $E_{\1,\1}(M)=2^r+r$.

\section{$M=R=\Z_n$ and $A=\{\pm 1\}$}

From \cite{ACFKP} we see that if $D_A(n)=k$, then $2^k>n$. Hence, from Proposition \ref{dlb} and Theorem \ref{ubd}, we see that $D_A(n)+1\leq D_{A,\1}(n)\leq 2D_A(n)$.
When $n$ is odd, by Observation \ref{char} it follows that $E_{A,\1}(n)=E(n)=2n-1$. When $n$ is even,  Corollary \ref{eam} shows $E_A(n)\leq E_{A,\1}(n)\leq n-2+D_{A,\1}(n)$.
In particular, if $D_{A,\1}(n)=D_A(n)+1$, then $E_{A,\1}(n)=E_A(n)$. Also, from Theorem \ref{cub} we see that $C_{A,\1}(n)=2C_A(n)$.

\begin{remark}
We can check that the sequence $S=(0,1,2,4)$ in $\Z_6$ has no $(A,\1)$-weighted zero-sum subsequence. It follows that $D_{A,\1}(6)\geq 5$. Suppose $S$ is a sequence having length five in $\Z_6$. Since $\binom{5}{2}=10>6$, we can find two subsequences of $S$ having length two which have the same sum. It follows that $S$ has an $(A,\1)$-weighted zero-sum subsequence. Hence, we see that $D_{A,\1}(6)=5$.

From \cite{ACFKP} we see that $D_A(6)=3$. Thus, we have $D_{A,\1}(6)=D_A(6)+2$. By considering the sequence $(0,1,2)$ in $\Z_4$ and the sequence $(0,1,2,4)$ in $\Z_8$ and by using a similar argument as in the previous paragraph, we can show that $D_{A,\1}(4)=D_A(4)+1$ and $D_{A,\1}(8)=D_A(8)+1$.
\end{remark}

\section{Concluding remarks}

Let $M=R=\Z_n$ and $A=\{\pm 1\}$. In \cite{SKS1} it is shown that $C_A(n)=n$ when $n$ is a power of $2$. As per our knowledge, the values of $C_A(n)$ are not known for other $n$ (except for some small values of $n$). So from Theorem \ref{cub} we see that $C_{A,\1}(n)=2n$ when $n$ is a power of $2$.

We have not found any value of $n$ for which $D_{A,\1}(n)\geq D_A(n)+3$. We feel that $D_{A,\1}(n)=D_A(n)+1$ whenever $n$ is a power of $2$. We also feel that $E_{A,\1}(n)=E_A(n)$ for every even number $n$.

Let $|M|=m$ and let $m$ be a multiple of char $R$. In \cite{KS} it is shown that $D_{A,B}(M)\leq E_{A,B}(M)\leq 2m-1$ and that $C_{A,B}(M)\leq m^2$.
Also, in \cite{KS} it is shown that $D_{\1,\1}(n)=E_{\1,\1}(n)=2n-1$ and $C_{\1,\1}(n)=n^2$. When $A=\{\pm 1\}$, we have obtained much smaller upper bounds for the corresponding $(A,\1)$-weighted zero-sum constants.

\end{document}